\theoremstyle{plain}
\newtheorem{theorem}{Theorem}[section]
\newtheorem{proposition}[theorem]{Proposition}
\newtheorem{assumption}[theorem]{Assumption}
\newtheorem{corollary}[theorem]{Corollary}
\theoremstyle{definition}
\newtheorem{definition}[theorem]{Definition}
\newtheorem{remark}[theorem]{Remark}
\newcommand{\C}{\mathbb{C}}
\newcommand{\R}{\mathbb{R}}
\newcommand{\1}{\mathbf{1}}
\newcommand{\diag}{\operatorname{diag}}
\newcommand{\norm}[1]{\left\lVert #1\right\rVert}
\newcommand{\pinv}{\dagger}
\newcommand{\cond}{\operatorname{cond}}
\newcommand{\asym}{\alpha}   
\newcommand{\dep}{\delta}    
\title{\textbf{Asymmetry in Spectral Graph Theory: Harmonic Analysis on Directed Networks via Biorthogonal Bases}\\
\vspace{0.3em}\large  (Random-Walk Laplacian Formulation)}
\author{%
Chandrasekhar Gokavarapu  \thanks{Lecturer in Mathematics,
Government College (A), Rajahmundry, A.P., India and 
Research Scholar, Department of Mathematics,
Acharya Nagarjuna University, Guntur, A.P., India\\
Email: chandrasekhargokavarapu@gmail.com}}
\date{}
\begin{document}
\maketitle

\begin{abstract}
The operator-theoretic dichotomy underlying diffusion on directed networks is \emph{symmetry versus non-self-adjointness} of the Markov transition operator. In the reversible (detailed-balance) regime, a directed random walk $P$ is self-adjoint in a stationary $\pi$-weighted inner product and admits orthogonal spectral coordinates; outside reversibility, $P$ is genuinely non-self-adjoint (often non-normal), and stability is governed by biorthogonal geometry and eigenvector conditioning. In this paper we develop an original harmonic-analysis framework for directed graphs anchored on the random-walk transition matrix $P=D_{\mathrm{out}}^{-1}A$ and the random-walk Laplacian $L_{\mathrm{rw}}=I-P$. Using biorthogonal left/right eigenvectors we define a \emph{Biorthogonal Graph Fourier Transform} (BGFT) adapted to directed diffusion, propose a diffusion-consistent frequency ordering based on decay rates $\Re(1-\lambda)$, and derive operator-norm stability bounds for iterated diffusion and for BGFT spectral filters. We prove sampling and reconstruction theorems for $P$-bandlimited (equivalently $L_{\mathrm{rw}}$-bandlimited) signals and quantify noise amplification through the conditioning of the biorthogonal eigenbasis. A simulation protocol on directed cycles and perturbed non-normal digraphs demonstrates that asymmetry alone does not dictate instability, whereas non-normality and eigenvector ill-conditioning drive reconstruction sensitivity, making BGFT the correct analytical language for directed diffusion processes.
\end{abstract}

\medskip
\noindent\textbf{Key words:}
directed graphs; random walks; non-normal matrices; biorthogonal eigenvectors;
graph Fourier transform; sampling; reversibility.

\noindent\textbf{2020 Mathematics Subject Classification:}
Primary 05C50; Secondary 15A18, 47A10, 60J10, 94A12.

\section{Introduction}
\subsection{Symmetry vs.\ non-self-adjointness: Markov operators on directed networks}
A directed network naturally carries a \emph{one-step evolution operator}: the random-walk (Markov) transition matrix
\[
P = D_{\mathrm{out}}^{-1}A,\qquad P\mathbf{1}=\mathbf{1},
\]
and its generator $L_{\mathrm{rw}} = I-P$.
From the operator-theoretic viewpoint, the central dichotomy is not ``directed vs.\ undirected'' per se, but
\emph{symmetry vs.\ non-self-adjointness} of the Markov operator.

The \emph{symmetry regime} is the reversible (detailed-balance) case: there exists a stationary distribution
$\pi$ with $\Pi=\diag(\pi)$ such that
\[
\Pi P = P^\top \Pi,
\]
equivalently $P$ is self-adjoint in the weighted Hilbert space $(\C^n,\langle\cdot,\cdot\rangle_\pi)$.
In that regime, $P$ is similar to a \emph{symmetric} matrix
$S=\Pi^{1/2}P\Pi^{-1/2}$, hence the spectrum is real and there is an orthonormal eigenbasis in the $\pi$-metric.
This is precisely the mechanism by which a directed diffusion can \emph{retain symmetry} (in a stationary metric),
recovering Parseval-type identities and a clean variational frequency ordering.
The \emph{asymmetry regime} is non-reversibility, where $P$ is genuinely non-self-adjoint and may be non-normal;
then orthogonality is lost, spectral coordinates can be ill-conditioned, and stability is governed by eigenvector
conditioning and non-normal effects \cite{TrefethenEmbree2005,LevinPeres2017}.

Our goal is to build a harmonic-analysis calculus that is \emph{native to the Markov operator} $P$:
it should reduce to the classical orthogonal theory in the reversible (symmetric) regime, and it should remain exact
and analyzable in the non-reversible (non-self-adjoint) regime. The correct language here is \emph{biorthogonality}:
left/right eigenvectors provide an exact analysis/synthesis pair even when $P$ is not normal.

\subsection{Position relative to graph Fourier analysis}
Graph Fourier analysis is often introduced through symmetric operators (undirected Laplacians/adjacencies), which guarantee orthogonal eigenvectors and stable spectral coordinates \cite{ShumanEtAl2013,SandryhailaMoura2014,OrtegaEtAl2018ProcIEEE}. \textcolor{blue}{Directed graph settings typically replace symmetry by alternative constructions: optimization-based directed transforms that seek to minimize a directed variation \cite{SardellittiBarbarossaDiLorenzo2017}, or transforms based on the Jordan decomposition of the adjacency matrix $A$ \cite{SandryhailaMoura2014}. While these approaches provide powerful tools for signal compression, they often decouple the transform from the underlying physics of diffusion.} Our approach is complementary and more ``operator first'': we start from the canonical diffusion operator $P$ and develop an \emph{exact} biorthogonal Fourier calculus for directed diffusion, with a transparent symmetry/asymmetry interpretation in terms of reversibility \cite{Norris1997,Seneta1981}. \textcolor{blue}{Unlike methods that force orthogonality through isometric embeddings, our BGFT embraces the biorthogonal geometry inherent in non-reversible Markov chains.}

\subsection{Main contributions}
\noindent\textbf{Main contributions (original).}
\begin{enumerate}[leftmargin=2em]
\item (\textbf{Markov-operator BGFT}) We define the \emph{Biorthogonal Graph Fourier Transform} (BGFT) for the
random-walk operator $P$ (equivalently $L_{\mathrm{rw}}=I-P$) via left/right eigenvectors, yielding exact
analysis/synthesis identities and diagonal dynamics for diffusion iterates.

\item (\textbf{Symmetry principle via reversibility}) We identify reversibility (detailed balance) as the precise
\emph{symmetry} notion for directed diffusion: in the $\pi$-metric, reversible $P$ becomes self-adjoint, restoring
orthogonality/Parseval identities and an exact diffusion-variational frequency ordering.

\item (\textbf{Diffusion-consistent frequency}) We propose a diffusion-consistent frequency ordering based on the
decay rate $\Re(1-\lambda)$ (and magnitude alternatives), aligning with the symmetry limit and the long-time behavior
of $x_{t+1}=Px_t$.

\item (\textbf{Stability theorems for non-self-adjoint diffusion}) We prove operator-norm bounds for diffusion iterates
$P^t$ and for BGFT spectral filters $h(P)$, explicitly separating eigenvalue decay from eigenvector conditioning, the
key instability driver in non-normal settings.

\item (\textbf{Sampling and reconstruction}) We prove sampling/reconstruction theorems for $P$-bandlimited signals and
quantify noise amplification through $\sigma_{\min}(P_MV_\Omega)$ and conditioning of the biorthogonal eigenbasis.

\item (\textbf{Asymmetry vs.\ non-normality: numerical separation}) We introduce simple indices for directedness and
departure from normality and provide experiments (directed cycle vs.\ perturbed non-normal digraphs) showing that
\emph{asymmetry alone} need not cause instability, whereas non-normality and eigenvector ill-conditioning do.
\end{enumerate}

\subsection{Organization}
Section~2 introduces directed diffusion operators and asymmetry/non-normality indices.
Section~3 presents reversibility as the symmetry regime in the stationary metric.
Sections~4--5 develop BGFT and stability bounds for diffusion and filtering.
Section~6 proves sampling and reconstruction results, followed by algorithms and illustrative experiments.

\section{Preliminaries: directed diffusion operators}
\subsection{Directed graphs, adjacency, and out-degree}
Let $G=(V,E,w)$ be a directed weighted graph with $|V|=n$ and adjacency $A\in\R^{n\times n}$:
\[
A_{ij}=\begin{cases}
w(i,j), & (i,j)\in E,\\
0, & \text{otherwise.}
\end{cases}
\]
Define out-degrees $d_i^{\mathrm{out}}=\sum_{j}A_{ij}$ and $D_{\mathrm{out}}=\diag(d_1^{\mathrm{out}},\dots,d_n^{\mathrm{out}})$.

\subsection{Transition matrix and random-walk Laplacian}
\begin{definition}[Random-walk transition matrix]
Assume $d_i^{\mathrm{out}}>0$ for all $i$ (no sinks). Define
\[
P:=D_{\mathrm{out}}^{-1}A.
\]
Then $P$ is row-stochastic: $P\1=\1$.
\end{definition}

\begin{definition}[Random-walk Laplacian]
Define
\[
L_{\mathrm{rw}}:=I-P.
\]
\end{definition}

\begin{proposition}[Basic properties]\label{prop:basic}
(i) $P\1=\1$ and $L_{\mathrm{rw}}\1=0$. \quad
(ii) If $P$ is irreducible and aperiodic, then the diffusion $x_{t+1}=Px_t$ converges to the stationary component (Markov mixing perspective).
\end{proposition}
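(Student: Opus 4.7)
The plan for part (i) is direct computation from the definitions; no deep tools are needed. Because the no-sink hypothesis $d_i^{\mathrm{out}}>0$ makes $D_{\mathrm{out}}$ invertible, the $i$-th row-sum of $P=D_{\mathrm{out}}^{-1}A$ equals
\[
(P\1)_i=\sum_{j}\frac{A_{ij}}{d_i^{\mathrm{out}}}=\frac{d_i^{\mathrm{out}}}{d_i^{\mathrm{out}}}=1,
\]
which is exactly $P\1=\1$; substituting into $L_{\mathrm{rw}}=I-P$ yields $L_{\mathrm{rw}}\1=\1-\1=0$. This identifies $\1$ as a right eigenvector of $P$ for the eigenvalue $1$ and as a right null vector of $L_{\mathrm{rw}}$, the algebraic conservation identity underlying every subsequent BGFT statement.

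For part (ii), the plan is to invoke the Perron--Frobenius theorem together with the standard convergence theorem for finite-state Markov chains. From (i) and row-stochasticity, $\rad(P)\le 1$ and $1\in\spec(P)$, so $\rad(P)=1$. Irreducibility of $P$ promotes $1$ to a simple Perron eigenvalue with a strictly positive left eigenvector, which I would normalize to the unique stationary distribution $\pi$ via $\pi^\top\1=1$; aperiodicity then upgrades this to strict subdominance $|\lambda|<1$ for every $\lambda\in\spec(P)\setminus\{1\}$. The next step is to record the rank-one/remainder splitting
\[
P^t=\1\pi^\top+R_t,\qquad R_t\1=0,\ \pi^\top R_t=0,
\]
where $R_t\to 0$ as $t\to\infty$ at a geometric rate controlled by the subdominant spectrum.

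Applying the splitting to $x_t=P^t x_0$ gives $x_t=(\pi^\top x_0)\,\1+R_t x_0\to(\pi^\top x_0)\,\1$, which is precisely the claimed convergence to the stationary component: every trajectory collapses onto $\mathrm{span}\{\1\}$ with coordinate equal to the $\pi$-integral of the initial data. The main (and essentially only) subtlety is the \emph{rate} of convergence in the non-self-adjoint regime: when $P$ is non-normal, the decay of $R_t$ is governed not merely by $|\lambda_2(P)|$ but also by eigenvector conditioning and Jordan structure, and a sharp quantitative version of (ii) would require the non-normal operator-norm estimates developed later in the paper. For the qualitative convergence asserted here, the classical Perron--Frobenius/ergodic machinery suffices and presents no substantive obstacle.
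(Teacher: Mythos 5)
Your proof is correct and takes essentially the same route as the paper: part (i) is the identical one-line row-stochasticity computation, and for part (ii) the paper simply cites standard Markov chain theory \cite{LevinPeres2017,Norris1997,Seneta1981}, whereas you supply precisely that classical argument (Perron--Frobenius simplicity of the eigenvalue $1$, aperiodicity giving strict subdominance $|\lambda|<1$ for $\lambda\neq 1$, and the rank-one splitting $P^t=\1\pi^\top+R_t$ with $R_t\to 0$, so $x_t\to(\pi^\top x_0)\,\1$), merely filling in details the paper delegates to references. Your closing caveat about non-normal transient rates is accurate but concerns quantitative mixing, which the proposition does not assert.
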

\begin{proof}
(i) Row-stochasticity gives $P\1=\1$, hence $(I-P)\1=0$. \\
(ii) This is standard Markov chain theory; see \cite{LevinPeres2017,Norris1997,Seneta1981}..
\end{proof}

\subsection{Asymmetry and non-normality indices}
\begin{definition}[Asymmetry index]
For any matrix $M$, define $\asym(M):=\norm{M-M^\top}_F/\norm{M}_F$ (with $\asym(0)=0$).
\end{definition}

\begin{definition}[Departure from normality]
For any matrix $M$, define $\dep(M):=\norm{MM^\ast-M^\ast M}_F/\norm{M}_F^{2}$ (with $\dep(0)=0$).
\end{definition}
Such non-normality measures (and related bounds) are classical in matrix analysis; see \cite{Henrici1962,ElsnerPaardekooper1987,Lee1995Departure,TrefethenEmbree2005}.

We will use these for $M=P$ and $M=L_{\mathrm{rw}}$ to separate structural directedness from numerical instability drivers.

\section{Reversibility as the symmetry regime for directed diffusion}
Let $P \in \mathbb{R}^{n\times n}$ be row-stochastic ($P\mathbf{1}=\mathbf{1}$).
Assume $P$ has a stationary distribution $\pi \in \mathbb{R}^n$ with
$\pi_i>0$ and $\pi^\top P = \pi^\top$. Let $\Pi := \mathrm{diag}(\pi)$.

Define the $\pi$-weighted inner product and norm by
\[
\langle x,y\rangle_{\pi} := x^\top \Pi y,
\qquad
\|x\|_{\pi}^2 := \langle x,x\rangle_{\pi}.
\]
The adjoint of $P$ with respect to $\langle\cdot,\cdot\rangle_{\pi}$ is
\[
P^{\dagger} := \Pi^{-1} P^\top \Pi,
\quad\text{so that}\quad
\langle Px,y\rangle_{\pi} = \langle x,P^{\dagger}y\rangle_{\pi}.
\]

\begin{definition}[Reversibility / detailed balance]
$P$ is \emph{reversible} (w.r.t.\ $\pi$) if
\[
\Pi P = P^\top \Pi,
\quad\text{equivalently}\quad
P = P^{\dagger}.
\]
\end{definition}
This detailed-balance condition is standard in reversible Markov chain theory; see \cite{Kelly1979,Norris1997,LevinPeres2017}.

\begin{theorem}[Weighted symmetry equivalences]
The following are equivalent:
\begin{enumerate}[label=(\roman*)]
\item $P$ is reversible: $\Pi P = P^\top \Pi$.
\item $P$ is self-adjoint in $\langle\cdot,\cdot\rangle_{\pi}$: $P=P^{\dagger}$.
\item The similarity transform $S:=\Pi^{1/2} P \Pi^{-1/2}$ is symmetric: $S=S^\top$.
\end{enumerate}
In this case, $P$ has a complete $\pi$-orthonormal eigenbasis, and all eigenvalues are real.
\end{theorem}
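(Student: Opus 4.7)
The plan is to establish the chain (i) $\Leftrightarrow$ (ii) $\Leftrightarrow$ (iii) by direct algebraic manipulation using only the definition $P^{\dagger}=\Pi^{-1}P^{\top}\Pi$ and the fact that $\pi_i>0$ guarantees $\Pi$ is positive definite, so $\Pi^{1/2}$ is well-defined and invertible. Once the three conditions are shown to coincide, I will derive the spectral consequences from (iii) by applying the real spectral theorem to $S$ and transporting the resulting eigenstructure back to $P$ along the similarity $P=\Pi^{-1/2}S\Pi^{1/2}$.

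For (i) $\Leftrightarrow$ (ii), the identity $P=P^{\dagger}$ unpacks to $P=\Pi^{-1}P^{\top}\Pi$, and multiplying on the left by $\Pi$ gives $\Pi P=P^{\top}\Pi$; the step is reversible since $\Pi$ is invertible. For (i) $\Leftrightarrow$ (iii), I would first compute
\[
S^{\top}=\bigl(\Pi^{1/2}P\Pi^{-1/2}\bigr)^{\top}=\Pi^{-1/2}P^{\top}\Pi^{1/2},
\]
so $S=S^{\top}$ is equivalent to $\Pi^{1/2}P\Pi^{-1/2}=\Pi^{-1/2}P^{\top}\Pi^{1/2}$; sandwiching both sides by $\Pi^{1/2}$ on the left and on the right yields $\Pi P=P^{\top}\Pi$, and invertibility of $\Pi^{1/2}$ makes the implication go both ways.

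For the spectral statement I would invoke the real spectral theorem on the symmetric matrix $S$ from (iii) to obtain an orthogonal $Q=[q_1\mid\cdots\mid q_n]$ and real diagonal $\Lambda=\diag(\lambda_1,\dots,\lambda_n)$ with $S=Q\Lambda Q^{\top}$. Setting $V:=\Pi^{-1/2}Q$, the relation $P=\Pi^{-1/2}S\Pi^{1/2}$ gives $PV=\Pi^{-1/2}SQ=\Pi^{-1/2}Q\Lambda=V\Lambda$, so the columns $v_i$ of $V$ are eigenvectors of $P$ with real eigenvalues $\lambda_i$. The $\pi$-orthonormality then reduces to the ordinary orthonormality of $Q$:
\[
\langle v_i,v_j\rangle_{\pi}=v_i^{\top}\Pi v_j=q_i^{\top}\Pi^{-1/2}\,\Pi\,\Pi^{-1/2}q_j=q_i^{\top}q_j=\delta_{ij}.
\]

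I do not anticipate any serious obstacle: the whole argument is essentially a one-line manipulation in each direction plus an invocation of the spectral theorem. The only points that require care are (a) recording that $\Pi^{1/2}$ is well-defined and invertible thanks to $\pi_i>0$, so the similarity and its inverse are legitimate, and (b) noting that the cancellation $\Pi^{-1/2}\Pi\Pi^{-1/2}=I$ is precisely what converts Euclidean orthonormality of $Q$ into $\pi$-orthonormality of $V$, which is the mechanism by which asymmetry of $P$ coexists with self-adjointness in the correct stationary metric.
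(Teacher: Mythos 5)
Your proposal is correct and follows essentially the same route as the paper: the equivalences are obtained by the same sandwiching with $\Pi^{\pm 1/2}$ (legitimate since $\pi_i>0$), and the spectral consequences come from the real spectral theorem applied to $S$. Your only addition is welcome detail the paper leaves implicit --- the explicit construction $V=\Pi^{-1/2}Q$ and the verification $v_i^{\top}\Pi v_j=q_i^{\top}q_j=\delta_{ij}$, which actually justifies the ``$\pi$-orthonormal eigenbasis'' claim rather than merely asserting diagonalizability by similarity.
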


\begin{proof}
(i)$\Leftrightarrow$(ii) is the definition of $P^\dagger$.
For (i)$\Rightarrow$(iii), multiply $\Pi P = P^\top \Pi$ on the left by $\Pi^{-1/2}$
and on the right by $\Pi^{-1/2}$ to get $\Pi^{1/2}P\Pi^{-1/2} = (\Pi^{1/2}P\Pi^{-1/2})^\top$.
Conversely, (iii)$\Rightarrow$(i) follows by reversing the steps.
If $S$ is symmetric, it is orthogonally diagonalizable with real eigenvalues, hence so is $P$ by similarity.
\end{proof}
See also \cite{Norris1997,Seneta1981} for related equivalences and consequences.

\begin{remark}[Symmetry/asymmetry interpretation for this paper]
Undirected diffusion is symmetric in the standard Euclidean inner product.
Directed diffusion can still be symmetric in the \emph{weighted} $\pi$-inner product
exactly in the reversible regime. Non-reversibility is the correct notion of
\emph{asymmetry} for random-walk harmonic analysis.
\end{remark}

\section{Biorthogonal Graph Fourier Transform (BGFT) for random-walk diffusion}

\subsection{Left/right eigenvectors and BGFT}
\begin{assumption}[Diagonalizability]\label{ass:diagP}
Assume $P$ is diagonalizable over $\C$:
\[
P = V\Lambda V^{-1},\qquad \Lambda=\diag(\lambda_1,\dots,\lambda_n),
\]
with right eigenvectors $V=[v_1\ \cdots\ v_n]$.
\end{assumption}

\textcolor{blue}{\begin{remark}[Non-diagonalizable operators]
While Assumption~\ref{ass:diagP} holds for almost all transition matrices (diagonalizable matrices are dense in $\C^{n\times n}$), certain highly symmetric digraph structures can yield non-diagonalizable $P$. In such cases, the BGFT can be generalized using the Jordan canonical form or the Schur decomposition. However, the biorthogonal framework presented here focuses on the most common case where a complete set of eigenvectors exists, providing a direct physical link to decay modes.
\end{remark}}

\begin{definition}[BGFT (diffusion version)]
For a graph signal $x\in\C^n$, define BGFT coefficients
\begin{equation}\label{eq:bgft}
\widehat{x}:=U^\ast x,\qquad \widehat{x}_k=u_k^\ast x,
\end{equation}
and synthesis
\begin{equation}\label{eq:synth}
x=V\widehat{x}=\sum_{k=1}^n v_k\,\widehat{x}_k.
\end{equation}
\end{definition}

\begin{theorem}[Perfect reconstruction]\label{thm:PR}
Under Assumption~\ref{ass:diagP}, for all $x\in\C^n$,
\[
I=\sum_{k=1}^n v_k u_k^\ast,
\qquad
x=\sum_{k=1}^n v_k\,u_k^\ast x.
\]
\end{theorem}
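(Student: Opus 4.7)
The plan is to recognize that, under Assumption~\ref{ass:diagP}, the identity $I = \sum_{k} v_k u_k^\ast$ is just the matrix equation $V V^{-1} = I$ rewritten as a sum of rank-one outer products, once the left-eigenvector matrix $U$ is identified with $(V^{-1})^\ast$. The argument is therefore essentially bookkeeping, with no substantive computation needed.

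First I would fix the relationship between the right and left eigenvector matrices. From $P = V\Lambda V^{-1}$, multiplying on the left by $V^{-1}$ gives $V^{-1}P = \Lambda V^{-1}$, so the rows of $V^{-1}$ are left eigenvectors of $P$. Setting $U := (V^{-1})^\ast$, the columns $u_k$ satisfy $u_k^\ast P = \lambda_k u_k^\ast$ and, by construction, $U^\ast V = I$, i.e.\ $u_k^\ast v_j = \delta_{kj}$. I would make this normalization choice explicit, since for each eigenvalue the left and right eigenvectors are individually determined only up to a nonzero scalar, and it is precisely the biorthonormality condition that pins these scalars so that \eqref{eq:bgft} and \eqref{eq:synth} form a genuine analysis/synthesis pair.

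Second I would expand $I = V V^{-1} = V U^\ast$ as a sum of rank-one operators. Writing $V = [v_1\ \cdots\ v_n]$ as a row of columns and $U^\ast$ as a column of rows $u_k^\ast$, the column-times-row formula for matrix multiplication gives
\[
V U^\ast \;=\; \sum_{k=1}^n v_k u_k^\ast,
\]
which is the first identity. Applying both sides to an arbitrary $x \in \C^n$ yields
\[
x \;=\; I x \;=\; \sum_{k=1}^n v_k (u_k^\ast x) \;=\; \sum_{k=1}^n v_k \widehat{x}_k,
\]
recovering \eqref{eq:synth}.

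No real obstacle arises; the only point demanding care is the normalization pinned down in step one. If one rescaled any eigenpair $(v_k, u_k) \mapsto (c_k v_k, \overline{c_k}^{-1} u_k)$, the factorization $P = V\Lambda V^{-1}$ is preserved and so is the partition of unity, which shows that the reconstruction formula is intrinsic to the biorthogonal geometry rather than to a particular normalization. In the defective case excluded by Assumption~\ref{ass:diagP}, one would have to replace outer products of eigenvectors with spectral projectors onto generalized eigenspaces from the Jordan or Schur decomposition, in line with the remark preceding the theorem.
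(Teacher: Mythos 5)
Your proof is correct and takes essentially the same route as the paper's one-line argument: both reduce the claim to $VU^\ast = V V^{-1} = I$ and expand the product into the rank-one outer products $\sum_{k=1}^n v_k u_k^\ast$. Your explicit fixing of the normalization $U := (V^{-1})^\ast$ (so that $u_k^\ast v_j = \delta_{kj}$) and your remark on scaling invariance simply make precise what the paper leaves implicit, and are welcome additions rather than a different method.
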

\begin{proof}
Since $U^\ast V=I$, we have $VU^\ast=I$; expand $VU^\ast$ in columns/rows.
\end{proof}

\subsection{Diffusion dynamics are diagonal in BGFT coordinates}
\begin{theorem}[BGFT-domain diffusion]\label{thm:diffusion}
Let $x_{t+1}=Px_t$ with $x_0\in\C^n$. Then
\[
\widehat{x}_{t}=U^\ast x_t = \Lambda^t\,\widehat{x}_0,
\qquad
x_t=V\Lambda^t U^\ast x_0.
\]
Equivalently, for $L_{\mathrm{rw}}=I-P$,
\[
\widehat{(L_{\mathrm{rw}}x)} = (I-\Lambda)\widehat{x}.
\]
\end{theorem}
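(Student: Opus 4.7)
The plan is to reduce the iterated dynamics to a diagonal action on BGFT coordinates by exploiting the eigendecomposition from Assumption~\ref{ass:diagP} together with the biorthogonality relation $U^\ast V = I$ (equivalently $U^\ast = V^{-1}$), which is exactly what underpins the perfect reconstruction of Theorem~\ref{thm:PR}.

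First, I would iterate the recurrence $x_{t+1}=Px_t$ to obtain $x_t = P^t x_0$. Then, using $P = V\Lambda V^{-1}$, a trivial induction (or telescoping) gives $P^t = V\Lambda^t V^{-1} = V\Lambda^t U^\ast$, since intermediate factors $V^{-1}V$ collapse to $I$. Applying the analysis map $U^\ast$ from the left and invoking $U^\ast V = I$ yields
\[
\widehat{x}_t = U^\ast x_t = U^\ast V \Lambda^t U^\ast x_0 = \Lambda^t \widehat{x}_0,
\]
which is the desired componentwise diagonal evolution $\widehat{x}_{t,k} = \lambda_k^t \widehat{x}_{0,k}$. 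The synthesis identity $x_t = V\Lambda^t U^\ast x_0$ then follows either by multiplying through by $V$ and using $VU^\ast=I$, or simply by reading off $P^t = V\Lambda^t U^\ast$.

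For the Laplacian identity, I would observe that $L_{\mathrm{rw}} = I-P$ shares the same biorthogonal eigenbasis as $P$ with spectrum $\{1-\lambda_k\}$, so $L_{\mathrm{rw}} = V(I-\Lambda)V^{-1}$. Applying $U^\ast$ to $L_{\mathrm{rw}} x$ and using $U^\ast V = I$ gives $\widehat{L_{\mathrm{rw}} x} = (I-\Lambda)\widehat{x}$.

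There is no real obstacle here: the content is entirely captured by biorthogonality $U^\ast V = I$, which converts similarity into a bona fide change of coordinates. The only point that deserves explicit mention is that, unlike the orthogonal (reversible) case, the analysis basis $U$ is \emph{not} the adjoint of the synthesis basis $V$ in the Euclidean metric — it is the dual biorthogonal basis. This distinction is inessential for deriving the diagonalization itself but becomes the critical driver of the conditioning-dependent stability bounds that follow in later sections, so I would flag it briefly in the proof as motivation for treating $\norm{V}\norm{U^\ast}$ carefully in Sections~5--6.
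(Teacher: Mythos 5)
Your proof is correct and follows essentially the same route as the paper's: both rest on $P=V\Lambda U^\ast$ with $U^\ast V=I$, the only cosmetic difference being that you telescope $P^t=V\Lambda^t U^\ast$ before applying the analysis map, while the paper iterates the one-step identity $U^\ast(Px)=\Lambda(U^\ast x)$. Your closing remark distinguishing the biorthogonal dual $U$ from a Euclidean adjoint is accurate and consistent with the paper's later conditioning bounds.
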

\begin{proof}
Use $P=V\Lambda U^\ast$ and $U^\ast V=I$. Then $U^\ast(Px)=\Lambda(U^\ast x)$ and iterate.
\end{proof}

\subsection{Diffusion-consistent frequency ordering}
For diffusion, the mode with eigenvalue $\lambda$ evolves as $\lambda^t$. If $|\lambda|<1$, it decays; if $\lambda\approx 1$, it is slowly varying (low frequency). We define the \emph{diffusion decay rate}:
\[
\omega_{\mathrm{diff}}(\lambda):=\Re(1-\lambda).
\]
Low $\omega_{\mathrm{diff}}$ corresponds to persistent/slow modes; high $\omega_{\mathrm{diff}}$ corresponds to fast decay. \textcolor{blue}{Compared to the imaginary-part ordering (phase) often used in adjacency-based GFTs, $\Re(1-\lambda)$ provides a direct measure of spectral energy dissipation. This choice aligns with the variational property of the random-walk Laplacian, where $\Re(1-\lambda)$ quantifies the smoothness of a mode relative to the one-step diffusion process.}

\section{Directed diffusion filtering and stability bounds}
\subsection{Spectral filters}
\begin{definition}[BGFT spectral filter for diffusion]
Let $h:\C\to\C$. Define
\[
H:=V\,h(\Lambda)\,U^\ast.
\]
\end{definition}

\begin{proposition}[Diagonal action in BGFT domain]
For $\widehat{x}=U^\ast x$,
\[
\widehat{Hx}=U^\ast Hx = h(\Lambda)\widehat{x}.
\]
\end{proposition}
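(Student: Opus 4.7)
The plan is to verify the identity by direct substitution, relying on two facts already established in the excerpt: the definition $H := V\,h(\Lambda)\,U^\ast$ and the biorthogonal pairing $U^\ast V = I$ that underlies Theorem~\ref{thm:PR} and Theorem~\ref{thm:diffusion}. Since $U$ is the matrix whose columns are the left eigenvectors chosen so that $U^\ast V = I$ under Assumption~\ref{ass:diagP}, the computation should require nothing beyond these two ingredients.

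First I would apply the definition of the BGFT analysis map to the vector $Hx$, writing $\widehat{Hx} = U^\ast (Hx)$. Next I would substitute the definition of the filter to obtain
\[
U^\ast H x \;=\; U^\ast V\, h(\Lambda)\, U^\ast x,
\]
and then collapse $U^\ast V = I$ in the middle, which leaves $h(\Lambda)\, U^\ast x = h(\Lambda)\,\widehat{x}$, i.e.\ the claimed identity. Combined with the synthesis formula $Hx = V\widehat{Hx}$ from Theorem~\ref{thm:PR}, this also recovers the explicit action $Hx = \sum_{k} h(\lambda_k)\, v_k\, (u_k^\ast x)$, which is the convenient form for later stability estimates.

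There is essentially no obstacle: the filter $H$ was defined precisely so that its BGFT representation is diagonal, and the biorthogonal construction of $(V,U)$ forces the cancellation in a single line. The only conceptual point worth noting is that $h(\Lambda)$ must be well defined on $\spec(P)$, which is automatic whenever $h$ is a function on $\C$; for operator-valued or matrix-valued $h$ in the non-diagonalizable case one would need the Jordan-form generalization mentioned in the remark following Assumption~\ref{ass:diagP}, but that lies outside the hypotheses of the present proposition.
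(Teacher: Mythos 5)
Your proof is correct and matches the paper's own argument exactly: both substitute $H = V\,h(\Lambda)\,U^\ast$ into $U^\ast Hx$ and collapse $U^\ast V = I$ to obtain $h(\Lambda)\widehat{x}$ in a single line. The extra remarks you add (the synthesis form $Hx = \sum_k h(\lambda_k)\,v_k\,(u_k^\ast x)$ and the caveat about the Jordan-form generalization) are sound but go beyond what the paper's one-line proof states.
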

\begin{proof}
Compute $U^\ast Vh(\Lambda)U^\ast x=h(\Lambda)\widehat{x}$.
\end{proof}

\subsection{Operator-norm stability: diffusion and filtering}
\begin{theorem}[Norm bound for diffusion iterates]\label{thm:iterate-bound}
Assume $P=V\Lambda V^{-1}$. Then for every $t\in\mathbb{N}$,
\[
\norm{P^t}_2 \le \cond(V)\,\max_{k}|\lambda_k|^t,
\qquad \cond(V)=\norm{V}_2\norm{V^{-1}}_2.
\]
\end{theorem}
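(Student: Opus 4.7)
The plan is to reduce the bound to three facts: (i) diagonalizability gives us a clean factorization of $P^t$, (ii) the spectral norm is submultiplicative, and (iii) for a diagonal matrix the spectral norm equals the largest entry in modulus. Together these give the claim with essentially no analytic input beyond Assumption~\ref{ass:diagP}.

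First I would use Assumption~\ref{ass:diagP} to write $P = V\Lambda V^{-1}$ and raise it to the $t$-th power, noting the telescoping $V^{-1}V = I$ collapses the intermediate factors so that
\[
P^t = V\Lambda^t V^{-1},\qquad \Lambda^t = \diag(\lambda_1^t,\dots,\lambda_n^t).
\]
Next I would apply submultiplicativity of $\norm{\cdot}_2$ to obtain
\[
\norm{P^t}_2 \le \norm{V}_2\,\norm{\Lambda^t}_2\,\norm{V^{-1}}_2.
\]
Then, since $\Lambda^t$ is diagonal, its singular values are exactly $|\lambda_k|^t$, so $\norm{\Lambda^t}_2 = \max_k |\lambda_k|^t$. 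Combining with the definition $\cond(V) = \norm{V}_2\norm{V^{-1}}_2$ delivers the stated bound.

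There is no real obstacle here: the result is a one-line consequence of the eigendecomposition and operator-norm inequalities. The only subtlety worth flagging in the write-up is the \emph{interpretation}, not the derivation: the bound cleanly separates the two distinct sources of growth, namely the spectral radius factor $\max_k|\lambda_k|^t$ (which controls asymptotic decay) and the eigenbasis conditioning $\cond(V)$ (which quantifies transient non-normal amplification and can be arbitrarily large even when $|\lambda_k|<1$ for all $k$). This separation is precisely the point of the theorem in the non-self-adjoint regime, so I would close the proof with a brief remark that $\cond(V)=1$ recovers the standard orthogonal bound in the reversible case, while $\cond(V)\gg 1$ signals potential transient blow-up despite spectral contraction, consistent with the pseudospectral perspective cited in the introduction.
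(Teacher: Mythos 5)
Your proof is correct and follows the paper's argument exactly: factor $P^t = V\Lambda^t V^{-1}$, apply submultiplicativity of $\norm{\cdot}_2$, and use $\norm{\Lambda^t}_2 = \max_k|\lambda_k|^t$ for the diagonal factor. The interpretive remarks you add about separating spectral decay from eigenbasis conditioning match the paper's own discussion in the remark following Theorem~\ref{thm:filter-bound}.
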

\begin{proof}
$P^t=V\Lambda^t V^{-1}$, hence $\norm{P^t}_2\le\norm{V}_2\norm{\Lambda^t}_2\norm{V^{-1}}_2
=\cond(V)\max_k|\lambda_k|^t$.
\end{proof}

\begin{theorem}[Norm bound for spectral filters]\label{thm:filter-bound}
Let $H=Vh(\Lambda)V^{-1}$. Then
\[
\norm{H}_2 \le \cond(V)\,\max_{k}|h(\lambda_k)|.
\]
\end{theorem}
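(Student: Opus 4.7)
The plan is to follow exactly the template of Theorem~\ref{thm:iterate-bound}: use the diagonalization $H = V\,h(\Lambda)\,V^{-1}$ (which holds because $U^{\ast}=V^{-1}$ under Assumption~\ref{ass:diagP}), apply submultiplicativity of the spectral norm to the three factors, and simplify the middle factor using the diagonal structure of $h(\Lambda)$.

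Concretely, first I would observe that $h(\Lambda)=\diag(h(\lambda_1),\dots,h(\lambda_n))$, whose operator $2$-norm is exactly $\max_k |h(\lambda_k)|$ (the spectral norm of a diagonal matrix equals its maximum-modulus entry). Next I would invoke $\norm{ABC}_2 \le \norm{A}_2\norm{B}_2\norm{C}_2$ with $A=V$, $B=h(\Lambda)$, $C=V^{-1}$, yielding
\[
\norm{H}_2 \;\le\; \norm{V}_2\,\norm{h(\Lambda)}_2\,\norm{V^{-1}}_2 \;=\; \cond(V)\,\max_k |h(\lambda_k)|.
\]
No further manipulation is needed. The estimate is simply Theorem~\ref{thm:iterate-bound} with the mapping $z \mapsto z^{t}$ replaced by the general filter symbol $z \mapsto h(z)$, and the proof is structurally identical.

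There is no substantive technical obstacle; the step that deserves emphasis is interpretive rather than computational. In the normal (in particular reversible) regime, $V$ may be chosen so that its columns are orthonormal in the relevant metric, giving $\cond(V)=1$ and hence the sharp identity $\norm{H}_2 = \max_k |h(\lambda_k)|$, i.e.\ the spectral radius of $h(P)$. Outside normality, the factor $\cond(V)>1$ quantifies the amplification of filter action caused by ill-conditioning of the biorthogonal eigenbasis, which is precisely the non-normality correction the paper wishes to isolate. I would include a single sentence to this effect immediately after the displayed inequality so the bound reads as a companion to Theorem~\ref{thm:iterate-bound} rather than as an isolated estimate.
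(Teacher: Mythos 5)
Your proof is correct and is essentially identical to the paper's: both apply submultiplicativity of the spectral norm to the factorization $H=V\,h(\Lambda)\,V^{-1}$ and use that the $2$-norm of the diagonal matrix $h(\Lambda)$ equals $\max_k|h(\lambda_k)|$. Your added remarks (the explicit diagonal-norm justification and the $\cond(V)=1$ sharpness in the normal/reversible regime) only make explicit what the paper leaves implicit, so no gap or divergence exists.
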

\begin{proof}
$\norm{H}_2\le\norm{V}_2\,\norm{h(\Lambda)}_2\,\norm{V^{-1}}_2
=\cond(V)\max_k|h(\lambda_k)|$.
\end{proof}

\begin{remark}[Symmetry/asymmetry and Instability]
When $P$ is normal and diagonalizable by a unitary basis, $\cond(V)=1$ and the bounds become tight and symmetry-like. \textcolor{blue}{It is critical to distinguish between structural asymmetry ($\alpha(P) > 0$) and numerical instability ($\cond(V) \gg 1$). Asymmetry is a prerequisite for the loss of orthogonality, but it does not \emph{necessitate} instability; for instance, the directed cycle is maximally asymmetric but remains normal and perfectly stable.} For non-normal $P$, $\cond(V)$ can be large, creating instability even if $|\lambda_k|\le 1$. \textcolor{blue}{Our numerical experiments in Section 9 confirm that this ill-conditioning, rather than directedness per se, drives the reconstruction error.}
\end{remark}

\section{BGFT energy in the stationary metric and its symmetry limit}
Assume $P$ is diagonalizable over $\mathbb{C}$: $P = V\Lambda V^{-1}$ and define $U^*:=V^{-1}$.
Let $\widehat{x}:=U^*x$ be BGFT coefficients so that $x=V\widehat{x}$.

\begin{theorem}[$\pi$-metric Parseval identity]
For any $x\in\mathbb{C}^n$,
\[
\|x\|_{\pi}^2 = \widehat{x}^{\,*}\,G_{\pi}\,\widehat{x},
\qquad
G_{\pi}:=V^*\Pi V.
\]
\end{theorem}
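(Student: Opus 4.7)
The plan is direct: substitute the BGFT synthesis identity $x=V\widehat{x}$ (guaranteed by Theorem~\ref{thm:PR} under Assumption~\ref{ass:diagP}) into the definition of the $\pi$-weighted squared norm, and then collapse the middle factors into the Hermitian form defined by $G_\pi:=V^*\Pi V$.

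First I would expand the left-hand side using the weighted inner product:
$$\|x\|_\pi^2 = \langle x,x\rangle_\pi = x^*\Pi x.$$
Second, I would apply synthesis $x=V\widehat{x}$ (and correspondingly $x^*=\widehat{x}^{\,*}V^*$) to obtain
$$\|x\|_\pi^2 = (V\widehat{x})^*\,\Pi\,(V\widehat{x}) = \widehat{x}^{\,*}\,(V^*\Pi V)\,\widehat{x} = \widehat{x}^{\,*}G_\pi\widehat{x},$$
which is exactly the claim. Since $\Pi\succ 0$ and $V$ is invertible under Assumption~\ref{ass:diagP}, the matrix $G_\pi$ is Hermitian positive definite, so the right-hand side is a bona fide quadratic form, consistent with the nonnegativity of $\|x\|_\pi^2$.

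There is essentially no analytical obstacle here; the identity is a linear-algebraic restatement of synthesis in a weighted metric. The substantive content, which I would highlight immediately after the proof, is interpretive: $G_\pi$ is the Gram matrix of the right eigenvectors $\{v_k\}$ in the $\pi$-inner product, so its off-diagonal entries $v_j^*\Pi v_k$ measure the departure of the BGFT basis from $\pi$-orthogonality. In the reversible regime (Section~3), $P=\Pi^{-1/2}S\Pi^{1/2}$ with $S$ symmetric; one can then normalize $V$ so that $V^*\Pi V=I$, in which case $G_\pi=I$ and the identity collapses to the classical Parseval equality $\|x\|_\pi^2=\sum_k|\widehat{x}_k|^2$. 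Thus this theorem is the exact non-self-adjoint generalization of Parseval for the $\pi$-metric, with the deviation $G_\pi - I$ quantifying the harmonic-analytic cost of non-reversibility.
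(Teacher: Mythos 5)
Your proof is correct and coincides with the paper's own one-line argument: substitute the synthesis identity $x=V\widehat{x}$ into $\|x\|_{\pi}^2=x^{*}\Pi x$ and read off the Hermitian form $\widehat{x}^{\,*}(V^{*}\Pi V)\widehat{x}$. The additional observations you make --- that $G_{\pi}$ is the $\pi$-Gram matrix of the right eigenvectors, is Hermitian positive definite, and reduces to $I$ under reversibility with a $\pi$-orthonormal choice of $V$ --- are accurate and match the paper's surrounding corollary and symmetry-limit remark, though they are not needed for the proof itself.
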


\begin{proof}
Since $x=V\widehat{x}$,
$\|x\|_{\pi}^2 = x^*\Pi x = \widehat{x}^* (V^*\Pi V)\widehat{x}$.
\end{proof}

\begin{corollary}[Two-sided bounds via conditioning in $\pi$]
Let $W:=\Pi^{1/2}V$. Then
\[
\sigma_{\min}(W)^2 \|\widehat{x}\|_2^2 \le \|x\|_{\pi}^2 \le \sigma_{\max}(W)^2 \|\widehat{x}\|_2^2.
\]
Equivalently, energy distortion is controlled by $\kappa(W)=\sigma_{\max}(W)/\sigma_{\min}(W)$.
\end{corollary}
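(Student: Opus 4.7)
The plan is to reduce the quadratic form $\widehat{x}^{*} G_{\pi}\widehat{x}$ appearing in the preceding theorem to a squared Euclidean norm, and then apply standard singular-value bounds. First I would note that because $\pi_i>0$, the matrix $\Pi^{1/2}$ is a well-defined invertible diagonal factor with $\Pi=\Pi^{1/2}\Pi^{1/2}$. Substituting into $G_{\pi}=V^{*}\Pi V$ gives the Cholesky-style factorization $G_{\pi}=V^{*}\Pi^{1/2}\Pi^{1/2}V = W^{*}W$ with $W:=\Pi^{1/2}V$. Combined with the identity $\|x\|_{\pi}^{2} = \widehat{x}^{*}G_{\pi}\widehat{x}$ from the preceding theorem, this yields
\[
\|x\|_{\pi}^{2} = \widehat{x}^{*}W^{*}W\widehat{x} = \|W\widehat{x}\|_{2}^{2},
\]
converting the $\pi$-energy of $x$ into the ordinary Euclidean norm of $W\widehat{x}$.

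Second, I would invoke the variational (Rayleigh--Ritz) characterization applied to the Hermitian PSD matrix $W^{*}W$, whose eigenvalues are the squared singular values of $W$. This gives
\[
\sigma_{\min}(W)^{2}\,\|\widehat{x}\|_{2}^{2} \;\le\; \|W\widehat{x}\|_{2}^{2} \;\le\; \sigma_{\max}(W)^{2}\,\|\widehat{x}\|_{2}^{2},
\]
which is exactly the claimed two-sided inequality. The equivalent formulation in terms of $\kappa(W)=\sigma_{\max}(W)/\sigma_{\min}(W)$ then follows by dividing the upper bound by the lower, so that the worst-case energy distortion between BGFT coefficients (in $\ell_{2}$) and signals (in the $\pi$-metric) is exactly $\kappa(W)^{2}$.

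There is no genuine obstacle: the argument is a one-line factorization followed by standard PSD spectral bounds. The only point worth flagging is well-definedness of the ratio, i.e.\ $\sigma_{\min}(W)>0$: this holds because $V$ is invertible under Assumption~\ref{ass:diagP} and $\Pi^{1/2}$ is invertible since $\pi_i>0$, so $W$ is invertible and $\kappa(W)<\infty$. I would also append a brief consistency remark: in the reversible (symmetry) regime of Section~3, $V$ can be chosen to be $\pi$-orthonormal, in which case $W^{*}W=I$, $\sigma_{\min}(W)=\sigma_{\max}(W)=1$, and $\kappa(W)=1$, recovering a distortion-free Parseval identity and showing that the corollary interpolates correctly between the symmetric and non-self-adjoint regimes.
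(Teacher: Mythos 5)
Your proof is correct and follows essentially the route the paper intends: the paper leaves the corollary unproved, but its adjacent proofs (the Parseval identity and the diffusion-variation bounds) use exactly your factorization $\|x\|_\pi = \|W\widehat{x}\|_2$ with $W=\Pi^{1/2}V$ followed by the singular-value sandwich $\sigma_{\min}(W)\|z\|_2 \le \|Wz\|_2 \le \sigma_{\max}(W)\|z\|_2$. Your added observations on invertibility of $W$ (hence $\kappa(W)<\infty$) and the reversible limit $\kappa(W)=1$ are correct and consistent with the paper's Remark on the exact symmetry limit.
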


\subsection{Diffusion variation and frequency ordering}
Define the random-walk Laplacian $L_{\mathrm{rw}}:=I-P$ and the diffusion variation
\[
\mathrm{TV}_{\pi}(x) := \|L_{\mathrm{rw}}x\|_{\pi}^2 = \|(I-P)x\|_{\pi}^2.
\]

\begin{theorem}[BGFT-domain bounds for diffusion variation]
With $x=V\widehat{x}$ and $W=\Pi^{1/2}V$,
\[
\sigma_{\min}(W)^2 \sum_{k=1}^n |1-\lambda_k|^2 |\widehat{x}_k|^2
\;\le\;
\mathrm{TV}_{\pi}(x)
\;\le\;
\sigma_{\max}(W)^2 \sum_{k=1}^n |1-\lambda_k|^2 |\widehat{x}_k|^2.
\]
\end{theorem}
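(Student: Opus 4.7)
The plan is to reduce the claim to the Parseval-type identity of the preceding theorem by exploiting the fact, already established in Theorem~\ref{thm:diffusion}, that $L_{\mathrm{rw}}=I-P$ acts diagonally in BGFT coordinates. Concretely, I will first write $x=V\widehat{x}$ and use $P=V\Lambda U^{\ast}$ with $U^{\ast}V=I$ to deduce
\[
L_{\mathrm{rw}}x=(I-P)V\widehat{x}=V(I-\Lambda)\widehat{x}.
\]
Hence $L_{\mathrm{rw}}x$ is itself a signal whose BGFT coefficient vector is $(I-\Lambda)\widehat{x}$, with $k$-th entry $(1-\lambda_k)\widehat{x}_k$.

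Next I would apply the $\pi$-metric Parseval identity to the signal $L_{\mathrm{rw}}x$. Setting $y:=(I-\Lambda)\widehat{x}$ and $W:=\Pi^{1/2}V$, this gives
\[
\mathrm{TV}_{\pi}(x)=\|L_{\mathrm{rw}}x\|_{\pi}^{2}=(V y)^{\ast}\Pi(V y)=y^{\ast}V^{\ast}\Pi V y=\|W y\|_{2}^{2}.
\]
The usual singular-value sandwich $\sigma_{\min}(W)^{2}\|y\|_{2}^{2}\le\|Wy\|_{2}^{2}\le\sigma_{\max}(W)^{2}\|y\|_{2}^{2}$, combined with the elementary identity $\|y\|_{2}^{2}=\sum_{k}|1-\lambda_k|^{2}|\widehat{x}_k|^{2}$, then yields both inequalities of the theorem in one stroke.

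There is no substantive obstacle here: the statement is a direct consequence of (a) the diagonal action of $L_{\mathrm{rw}}$ in BGFT coordinates and (b) the two-sided bounds already recorded in the corollary above. The only mildly delicate point is to be explicit that $V^{\ast}\Pi V=W^{\ast}W$ so that the bilinear form $\|L_{\mathrm{rw}}x\|_{\pi}^{2}$ really is $\|Wy\|_{2}^{2}$ with $y$ the BGFT coefficient vector of $L_{\mathrm{rw}}x$, rather than (for instance) $\|\Pi^{1/2}V(I-\Lambda)\widehat{x}\|_{2}^{2}$ interpreted with a different weighting of $\widehat{x}$. Once this bookkeeping is in place, the two inequalities are just the extremal Rayleigh quotients of $W^{\ast}W$ applied to $y$. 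I would close with a brief remark that, in the reversible limit, $S=\Pi^{1/2}P\Pi^{-1/2}$ is symmetric, $W$ can be chosen orthogonal so $\sigma_{\min}(W)=\sigma_{\max}(W)=1$, and the two-sided bound collapses to the sharp Parseval identity $\mathrm{TV}_{\pi}(x)=\sum_k|1-\lambda_k|^{2}|\widehat{x}_k|^{2}$, thereby making the symmetry limit transparent.
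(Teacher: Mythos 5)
Your proposal is correct and follows essentially the same route as the paper's proof: both reduce to $L_{\mathrm{rw}}x = V(I-\Lambda)\widehat{x}$, identify $\|L_{\mathrm{rw}}x\|_{\pi} = \|W(I-\Lambda)\widehat{x}\|_2$ with $W=\Pi^{1/2}V$, and apply the singular-value sandwich to $z=(I-\Lambda)\widehat{x}$. Your explicit bookkeeping via $V^{\ast}\Pi V = W^{\ast}W$ and the Parseval identity is a slightly more structured presentation of the same one-line computation, and your closing remark on the reversible limit matches the paper's subsequent remark.
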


\begin{proof}
$(I-P)x = V(I-\Lambda)\widehat{x}$. Then
$\|(I-P)x\|_{\pi} = \|\Pi^{1/2}V(I-\Lambda)\widehat{x}\|_2 = \|W(I-\Lambda)\widehat{x}\|_2$.
Apply $\sigma_{\min}(W)\|z\|_2 \le \|Wz\|_2 \le \sigma_{\max}(W)\|z\|_2$ to $z=(I-\Lambda)\widehat{x}$
and square.
\end{proof}

\begin{remark}[Exact symmetry limit]
If $P$ is reversible, one can choose $V$ $\pi$-orthonormal, hence $W=\Pi^{1/2}V$ is unitary and
$\sigma_{\min}(W)=\sigma_{\max}(W)=1$. Then the inequalities become equalities and
$|1-\lambda_k|$ becomes an exact diffusion frequency.
\end{remark}

\section{Sampling and reconstruction for diffusion-bandlimited signals}
Let $\Omega\subset\{1,\dots,n\}$ with $|\Omega|=K$ represent the ``low diffusion-frequency'' modes (e.g.\ smallest $\omega_{\mathrm{diff}}(\lambda)$ or largest $\Re(\lambda)$). Let $V_\Omega\in\C^{n\times K}$ contain $\{v_k\}_{k\in\Omega}$.

\begin{definition}[Diffusion-bandlimited signals]
A signal $x$ is $\Omega$-bandlimited (relative to $P$) if $x=V_\Omega c$ for some $c\in\C^K$.
\end{definition}

Bandlimited sampling on graphs has a substantial literature; see, e.g., \cite{Pesenson2008PaleyWiener,AnisGaddeOrtega2016Sampling}.

Let $M\subset V$, $|M|=m$, and $P_M\in\{0,1\}^{m\times n}$ be the restriction operator.

\begin{theorem}[Exact recovery]\label{thm:recover}
If $x=V_\Omega c$ and $P_MV_\Omega$ has full column rank $K$, then $x$ is uniquely determined by samples $y=P_Mx$ and recovered by
\[
\widehat{c}=(P_MV_\Omega)^\pinv y,\qquad \widehat{x}=V_\Omega\widehat{c}.
\]
\end{theorem}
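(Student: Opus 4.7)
The plan is to reduce the recovery claim to a standard fact about left-invertibility of tall matrices, applied to the restricted synthesis matrix $P_M V_\Omega \in \C^{m\times K}$. The sampling model $y = P_M x$ together with the bandlimit assumption $x = V_\Omega c$ gives the linear relation $y = (P_M V_\Omega)\,c$, and everything reduces to inverting this relation on the range of $V_\Omega$.

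First I would substitute the bandlimited representation into the sampling map to obtain the identity
\[
y \;=\; P_M x \;=\; P_M V_\Omega\, c.
\]
Since $P_M V_\Omega$ has full column rank $K$ by hypothesis, its Moore--Penrose pseudoinverse is the closed-form left inverse
\[
(P_M V_\Omega)^{\pinv} \;=\; \bigl((P_M V_\Omega)^{\ast}(P_M V_\Omega)\bigr)^{-1}(P_M V_\Omega)^{\ast},
\]
and in particular satisfies $(P_M V_\Omega)^{\pinv} (P_M V_\Omega) = I_K$. Applying this to both sides of the sampling identity yields $\widehat{c} := (P_M V_\Omega)^{\pinv} y = c$, from which $\widehat{x} = V_\Omega \widehat{c} = V_\Omega c = x$, establishing exact recovery.

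For the uniqueness statement, I would argue by contradiction: suppose $x' = V_\Omega c'$ were another $\Omega$-bandlimited signal with $P_M x' = y$. Then $P_M V_\Omega (c - c') = 0$, and full column rank of $P_M V_\Omega$ forces $c = c'$, hence $x = x'$. Note that the columns of $V_\Omega$ are linearly independent (as a subset of the eigenbasis $V$ under Assumption~\ref{ass:diagP}), so the map $c \mapsto V_\Omega c$ is itself injective, and uniqueness of $c$ is equivalent to uniqueness of $x$.

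There is essentially no main obstacle here: the argument is routine linear algebra once the full-column-rank hypothesis is in place. The only subtlety worth flagging is that the hypothesis $\rank(P_M V_\Omega) = K$ implicitly requires $m \ge K$ and couples the geometry of the sampling set $M$ to the biorthogonal eigenbasis; quantitative control of recovery under noise (i.e., how $\sigma_{\min}(P_M V_\Omega)$ enters the error bound $\|\widehat{x} - x\|_2 \le \|V_\Omega\|_2 \, \sigma_{\min}(P_M V_\Omega)^{-1} \|\text{noise}\|_2$) is the natural next step, matching the contribution advertised in the introduction, but is not part of the exact-recovery statement itself.
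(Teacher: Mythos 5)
Your proposal is correct and follows essentially the same route as the paper's (much terser) proof: substitute $x=V_\Omega c$ into the sampling map to get $y=(P_MV_\Omega)c$, invoke the left-inverse property $(P_MV_\Omega)^{\pinv}(P_MV_\Omega)=I_K$ guaranteed by full column rank, and conclude $\widehat{c}=c$, $\widehat{x}=x$, with injectivity of $P_MV_\Omega$ giving uniqueness. Your version merely fills in the details the paper leaves implicit (the explicit normal-equations form of the pseudoinverse and the contradiction argument for uniqueness), so there is nothing to correct.
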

\begin{proof}
Full column rank makes $P_MV_\Omega$ injective; solve the linear system in least squares.
\end{proof}

Related sampling-set conditions and reconstruction stability on graphs are discussed in \cite{Pesenson2008PaleyWiener,AnisGaddeOrtega2016Sampling}.

\begin{theorem}[Noise sensitivity]\label{thm:noise}
If $y=P_Mx+\eta$, then the least-squares reconstruction satisfies
\[
\norm{\widehat{x}-x}_2
\le
\norm{V_\Omega}_2\,\norm{(P_MV_\Omega)^\pinv}_2\,\norm{\eta}_2
=
\norm{V_\Omega}_2\,\frac{\norm{\eta}_2}{\sigma_{\min}(P_MV_\Omega)}.
\]
\end{theorem}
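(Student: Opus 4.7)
The plan is to trace the noise through the two linear maps that constitute the reconstruction, $\widehat{c} = (P_M V_\Omega)^\pinv y$ followed by $\widehat{x} = V_\Omega \widehat{c}$, and then bound each factor by its operator norm. The argument is essentially an exercise in linearity plus a standard identity for the pseudoinverse of a tall matrix of full column rank, so I expect no substantial obstacle; the task is mainly to assemble these pieces cleanly.

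First I would substitute the measurement model $y = P_M x + \eta = P_M V_\Omega c + \eta$ into the least-squares formula. Since $P_M V_\Omega$ has full column rank $K$ by the standing assumption inherited from Theorem~\ref{thm:recover}, its Moore--Penrose pseudoinverse is a left inverse, i.e.\ $(P_M V_\Omega)^\pinv (P_M V_\Omega) = I_K$. This immediately gives the decomposition
\[
\widehat{c} = (P_M V_\Omega)^\pinv (P_M V_\Omega)c + (P_M V_\Omega)^\pinv \eta = c + (P_M V_\Omega)^\pinv \eta,
\]
isolating the noise-induced error in coefficient space as $\widehat{c} - c = (P_M V_\Omega)^\pinv \eta$.

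Next I would lift this back to signal space. Using $\widehat{x} = V_\Omega \widehat{c}$ and $x = V_\Omega c$, linearity yields $\widehat{x} - x = V_\Omega (P_M V_\Omega)^\pinv \eta$. Taking Euclidean norms and applying the submultiplicativity of the spectral norm twice produces
\[
\norm{\widehat{x} - x}_2 \le \norm{V_\Omega}_2 \, \norm{(P_M V_\Omega)^\pinv}_2 \, \norm{\eta}_2,
\]
which is the first inequality of the claim.

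Finally, to obtain the advertised equality, I would invoke the standard singular-value identity for a matrix of full column rank: for such $B = P_M V_\Omega \in \C^{m\times K}$ with $m \ge K$ and $\rank(B)=K$, the nonzero singular values of $B^\pinv$ are the reciprocals of those of $B$, and in particular $\norm{B^\pinv}_2 = 1/\sigma_{\min}(B)$. Substituting this identity into the previous bound delivers the right-hand expression in the statement. The only subtlety worth flagging is that the identity relies on the full-column-rank hypothesis (otherwise $\sigma_{\min}(B)=0$ and $\norm{B^\pinv}_2$ must be interpreted via the nonzero spectrum), but this hypothesis is precisely the recoverability condition already required by Theorem~\ref{thm:recover}, so the bound holds under the same regime and the proof is complete.
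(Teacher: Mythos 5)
Your proof is correct and follows essentially the same route as the paper's: the paper's one-line proof is exactly the decomposition $\widehat{c}-c=(P_MV_\Omega)^\pinv\eta$ followed by $\widehat{x}-x=V_\Omega(\widehat{c}-c)$, which you derive in full detail via the left-inverse property of the pseudoinverse under the full-column-rank hypothesis. Your explicit justification of the identity $\norm{(P_MV_\Omega)^\pinv}_2 = 1/\sigma_{\min}(P_MV_\Omega)$ and your remark that this requires the rank condition from Theorem~\ref{thm:recover} merely fill in steps the paper leaves implicit.
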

\begin{proof}
$\widehat{c}-c=(P_MV_\Omega)^\pinv\eta$ and $\widehat{x}-x=V_\Omega(\widehat{c}-c)$.
\end{proof}

\section{Algorithms}
\begin{algorithm}[h]
\caption{BGFT for random-walk diffusion}
\label{alg:bgft}
\begin{algorithmic}[1]
\Require $A$ (directed adjacency), $D_{\mathrm{out}}$ invertible, signal $x$
\Ensure BGFT coefficients $\widehat{x}$, eigenpairs $(\Lambda,V)$
\State $P\gets D_{\mathrm{out}}^{-1}A$, \quad $L_{\mathrm{rw}}\gets I-P$
\State Compute eigendecomposition $P=V\Lambda V^{-1}$ (complex arithmetic)
\State $U^\ast\gets V^{-1}$
\State $\widehat{x}\gets U^\ast x$
\State \Return $(\widehat{x},\Lambda,V)$
\end{algorithmic}
\end{algorithm}

\begin{algorithm}[h]
\caption{Diffusion filtering and bandlimited reconstruction}
\label{alg:recon}
\begin{algorithmic}[1]
\Require $P=V\Lambda V^{-1}$, response $h(\cdot)$, bandlimit $\Omega$, sample set $M$, samples $y$
\Ensure filtered signal $Hx$ or reconstructed $\widehat{x}$
\State \textbf{Filtering:} $H\gets Vh(\Lambda)V^{-1}$, output $Hx\gets Hx$
\State \textbf{Reconstruction:} form $V_\Omega$, solve $\widehat{c}\gets\arg\min_c \norm{P_MV_\Omega c-y}_2^2$
\State $\widehat{x}\gets V_\Omega\widehat{c}$
\end{algorithmic}
\end{algorithm}

\section{Experiments: directed cycle vs perturbed non-normal digraphs}
\subsection{Graphs}
Use $n\in\{32,64,128\}$ and compare:
\begin{enumerate}[leftmargin=2em]
\item \textbf{Undirected cycle} $C_n$ (convert to diffusion by symmetrizing and normalizing).
\item \textbf{Directed cycle} $\overrightarrow{C_n}$: $P$ is a permutation shift (asymmetric but normal/unitary).
\item \textbf{Perturbed directed cycle} $\overrightarrow{C_n}^{(\varepsilon)}$: add a directed chord then renormalize rows to keep $P$ stochastic; this typically yields non-normal $P$ and large $\cond(V)$.
\end{enumerate}

\subsection{Tasks and metrics}
\begin{itemize}[leftmargin=2em]
\item \textbf{Diffusion filtering:} low-pass via $h(\lambda)=\exp(-\tau(1-\lambda))$ (BGFT-defined), compare smoothing strength on the three graphs.
\item \textbf{Forecasting:} iterate diffusion $x_{t+1}=Px_t$ and compare $\norm{x_t}_2$ trends with Theorem~\ref{thm:iterate-bound}.
\item \textbf{Sampling/reconstruction:} generate $\Omega$-bandlimited signals and recover from $m$ samples; report RelErr and $\sigma_{\min}(P_MV_\Omega)$.
\item \textbf{Perturbed directed cycle} $\overrightarrow{C_n}^{(\varepsilon)}$: add a directed chord \textcolor{blue}{from node $0$ to node $n/2$ with weight $\varepsilon=20$}, then renormalize rows to keep $P$ stochastic; this typically yields non-normal $P$ and large $\cond(V)$.
\end{itemize}

Report tables/figures:
\[
\asym(P),\ \dep(P),\ \cond(V),\ \cond(P_MV_\Omega),\ 
\mathrm{RelErr}=\frac{\norm{\widehat{x}-x}_2}{\norm{x}_2}.
\]

\begin{table}[t]
\centering
\caption{Minimal numerical illustration for the transition-operator BGFT.}
\setlength{\tabcolsep}{4pt}      
\renewcommand{\arraystretch}{1.5}
\resizebox{\linewidth}{!}{
\begin{tabular}{lrrrrr}
\toprule
Graph & $\alpha(P)$ & $\delta(P)$ & $\kappa(V)$ & $\kappa(P_M V_\Omega)$ & RelErr \\
\midrule
Undirected cycle $A_{\mathrm{und}}$ & 0 & 0 & 1.2453204511204569 & 36.59492056037454 & 0.0000031215500108761767 \\
Directed cycle $A_{\rightarrow}$ & 1.4142135623730951 & 0 & 1 & 93.04681515171424 & 0.000007080903224516353 \\
Perturbed $A_{\varepsilon}$ ($\varepsilon=20$) & 1.414213562373095 & 0.02987165083714049 & 28.011585066632986 & 352.8935063092261 & 0.00002523914083929862 \\
\bottomrule
\end{tabular}}
\end{table}

\noindent\textbf{Observed separation.}
The directed cycle is asymmetric ($\alpha(P)>0$) but normal ($\delta(P)=0$) with a well-conditioned eigenbasis ($\kappa(V)=1$),
whereas the perturbed digraph remains asymmetric but becomes non-normal ($\delta(P)>0$) and strongly ill-conditioned
($\kappa(V)\gg 1$), leading to markedly larger reconstruction error, consistent with the stability bounds.

\section{Conclusion}
We developed an original diffusion-centered harmonic analysis for directed graphs using the random-walk transition matrix $P$ and Laplacian $L_{\mathrm{rw}}=I-P$. The BGFT provides exact analysis/synthesis, diagonalizes diffusion dynamics, motivates a diffusion-consistent frequency ordering, and yields explicit stability bounds for iterated diffusion and spectral filtering governed by eigenvector conditioning. Sampling and reconstruction theorems quantify how non-normality amplifies noise through $\sigma_{\min}(P_MV_\Omega)$ and $\cond(V)$. This establishes a principled symmetry/asymmetry narrative: symmetry yields orthogonality and stability; asymmetry forces biorthogonal geometry; non-normality determines practical robustness.

\section*{Acknowledgements}
The author expresses his gratitude to the Commissioner of Collegiate Education (CCE),
Government of Andhra Pradesh, and the Principal, Government College (Autonomous),
Rajahmundry, for continued support and encouragement.

\section*{Author Contributions}
The author is solely responsible for conceptualization, methodology, analysis, software,
validation, and writing.

\section*{Funding}
No external funding was received.

\section*{Data Availability Statement}
No external datasets were used. The code that generates the reported numerical table/figures
will be provided as a reproducible script and (upon acceptance) via a public repository link.

\section*{Conflicts of Interest}
The author declares no conflicts of interest.

\appendix
\section{Reference Python code (P and Lrw, BGFT, filtering, reconstruction)}
\lstset{
  basicstyle=\ttfamily\small,
  breaklines=true,
  columns=fullflexible,
  frame=single
}
\begin{lstlisting}[language=Python]
import numpy as np

def directed_cycle_A(n):
    A = np.zeros((n,n), dtype=float)
    for i in range(n):
        A[i, (i+1)%n] = 1.0
    return A

def undirected_cycle_A(n):
    A = np.zeros((n,n), dtype=float)
    for i in range(n):
        A[i, (i+1)%n] = 1.0
        A[(i+1)%n, i] = 1.0
    return A

def add_directed_chord(A, eps=0.2, i=0, j=None):
    n = A.shape[0]
    if j is None:
        j = n//2
    B = A.copy()
    B[i, j] += eps
    return B

def D_out(A):
    d = A.sum(axis=1)
    if np.any(d == 0):
        raise ValueError("Found sink node (out-degree 0). Fix by adding small outgoing weight.")
    return np.diag(d)

def transition_P(A):
    D = D_out(A)
    return np.linalg.inv(D) @ A

def L_rw(P):
    n = P.shape[0]
    return np.eye(n) - P

def asymmetry_index(M):
    den = np.linalg.norm(M, ord='fro')
    if den == 0:
        return 0.0
    return np.linalg.norm(M - M.T, ord='fro') / den

def departure_from_normality(M):
    Mf = np.linalg.norm(M, ord='fro')
    if Mf == 0:
        return 0.0
    MMstar = M @ M.conj().T
    MstarM = M.conj().T @ M
    return np.linalg.norm(MMstar - MstarM, ord='fro') / (Mf**2)

def bgft_decomposition(P):
    lam, V = np.linalg.eig(P)
    Vinv = np.linalg.inv(V)
    Ustar = Vinv
    return lam, V, Ustar

def diffusion_filter_matrix(P, tau=2.0):
    # h(lambda) = exp(-tau*(1-lambda))
    lam, V, Ustar = bgft_decomposition(P)
    h = np.exp(-tau*(1.0 - lam))
    H = V @ np.diag(h) @ Ustar
    return H

def sample_operator(n, M):
    m = len(M)
    Pm = np.zeros((m,n), dtype=float)
    for r, idx in enumerate(M):
        Pm[r, idx] = 1.0
    return Pm

def reconstruct_bandlimited(P, Omega, M, x, noise=0.0, seed=0):
    np.random.seed(seed)
    lam, V, _ = bgft_decomposition(P)
    V_O = V[:, Omega]
    Pm = sample_operator(P.shape[0], M)
    y = Pm @ x
    if noise > 0:
        y = y + noise * np.random.randn(*y.shape)
    B = Pm @ V_O
    c_hat, *_ = np.linalg.lstsq(B, y, rcond=None)
    x_hat = V_O @ c_hat
    relerr = np.linalg.norm(x_hat - x) / np.linalg.norm(x)
    condB = np.linalg.cond(B)
    return x_hat, relerr, condB

if __name__ == "__main__":
    n = 64

    A_und = undirected_cycle_A(n)
    A_dir = directed_cycle_A(n)
    A_per = add_directed_chord(A_dir, eps=20, i=0, j=n//2)

    for name, A in [("undirected", A_und), ("directed", A_dir), ("perturbed", A_per)]:
        P = transition_P(A)
        lam, V, _ = bgft_decomposition(P)
        print(name,
              "alpha(P)=", asymmetry_index(P),
              "delta(P)=", departure_from_normality(P),
              "cond(V)=", np.linalg.cond(V),
              "rho(P)~=", np.max(np.abs(lam)))

    # Create a bandlimited signal on perturbed digraph
    P = transition_P(A_per)
    lam, V, _ = bgft_decomposition(P)
    K = 8

    # Choose "low diffusion-frequency": largest Re(lambda) (closest to 1)
    Omega = np.argsort(-np.real(lam))[:K]

    c = np.random.randn(K) + 1j*np.random.randn(K)
    x = V[:, Omega] @ c

    m = 20
    M = np.sort(np.random.choice(n, size=m, replace=False))
    x_hat, relerr, condB = reconstruct_bandlimited(P, Omega, M, x, noise=0.0)

    print("RelErr=", relerr, "cond(P_M V_Omega)=", condB)

    # Diffusion smoothing example
    H = diffusion_filter_matrix(P, tau=2.0)
    x_smooth = H @ x
    print("||x||2 =", np.linalg.norm(x), "||Hx||2 =", np.linalg.norm(x_smooth))
\end{lstlisting}



\begin{thebibliography}{99}

\bibitem{Chung2005DirectedCheeger}
F.~Chung,
\newblock Laplacians and the Cheeger inequality for directed graphs,
\newblock {\em Annals of Combinatorics} {\bf 9}(1) (2005), 1--19.
\newblock doi:10.1007/s00026-005-0237-z

\bibitem{ShumanEtAl2013}
D.~I. Shuman, S.~K. Narang, P.~Frossard, A.~Ortega, and P.~Vandergheynst,
\newblock The emerging field of signal processing on graphs: Extending high-dimensional data analysis to networks and other irregular domains,
\newblock {\em IEEE Signal Processing Magazine} {\bf 30}(3) (2013), 83--98.
\newblock doi:10.1109/MSP.2012.2235192

\bibitem{SandryhailaMoura2014}
A.~Sandryhaila and J.~M.~F. Moura,
\newblock Discrete signal processing on graphs: Frequency analysis,
\newblock {\em IEEE Transactions on Signal Processing} {\bf 62}(12) (2014), 3042--3054.
\newblock doi:10.1109/TSP.2014.2321121

\bibitem{SardellittiBarbarossaDiLorenzo2017}
S.~Sardellitti, S.~Barbarossa, and P.~Di~Lorenzo,
\newblock On the graph Fourier transform for directed graphs,
\newblock {\em IEEE Journal of Selected Topics in Signal Processing} {\bf 11}(6) (2017), 796--811.
\newblock doi:10.1109/JSTSP.2017.2726979

\bibitem{MarquesSegarraMateos2020}
A.~G. Marques, S.~Segarra, and G.~Mateos,
\newblock Signal processing on directed graphs,
\newblock arXiv:2008.00586 (2020).
\newblock (Also circulated as an IEEE Signal Processing Magazine tutorial, Nov.\ 2020.)

\bibitem{LevinPeres2017}
D.~A. Levin and Y.~Peres,
\newblock {\em Markov Chains and Mixing Times},
\newblock 2nd ed., American Mathematical Society, 2017.
\newblock (With contributions by E.~L. Wilmer.)

\bibitem{TrefethenEmbree2005}
L.~N. Trefethen and M.~Embree,
\newblock {\em Spectra and Pseudospectra: The Behavior of Nonnormal Matrices and Operators},
\newblock Princeton University Press, 2005.

\bibitem{Higham2020Nonnormal}
N.~J. Higham,
\newblock What is a (non)normal matrix?,
\newblock Blog post (2020). Available at:
\newblock \texttt{https://nhigham.com/2020/11/24/what-is-a-nonnormal-matrix/}
\newblock (accessed online).


\bibitem{Norris1997}
J.~R. Norris,
\newblock {\em Markov Chains},
\newblock Cambridge University Press, 1997.

\bibitem{Kelly1979}
F.~P. Kelly,
\newblock {\em Reversibility and Stochastic Networks},
\newblock Wiley, 1979.

\bibitem{Seneta1981}
E.~Seneta,
\newblock {\em Non-negative Matrices and Markov Chains},
\newblock Springer, 2nd ed., 1981.
\newblock doi:10.1007/0-387-32792-4.

\bibitem{Henrici1962}
P.~Henrici,
\newblock Bounds for iterates, inverses, spectral variation and fields of values of non-normal matrices,
\newblock {\em Numerische Mathematik} {\bf 4} (1962), 24--40.
\newblock doi:10.1007/BF01386294.

\bibitem{Lee1995Departure}
S.~L. Lee,
\newblock A practical upper bound for departure from normality,
\newblock {\em SIAM Journal on Matrix Analysis and Applications} {\bf 16} (1995), 462--468.
\newblock doi:10.1137/S0895479893255184.

\bibitem{ElsnerPaardekooper1987}
L.~Elsner and M.~H.~C. Paardekooper,
\newblock On measures of nonnormality of matrices,
\newblock {\em Linear Algebra and its Applications} {\bf 92} (1987), 107--124.
\newblock doi:10.1016/0024-3795(87)90253-9.

\bibitem{TohTrefethen1999Kreiss}
K.-C. Toh and L.~N. Trefethen,
\newblock The Kreiss matrix theorem on a general complex domain,
\newblock {\em SIAM Journal on Matrix Analysis and Applications} {\bf 20} (1999).
\newblock doi:10.1137/S0895479897324020.

\bibitem{OrtegaEtAl2018ProcIEEE}
A.~Ortega, P.~Frossard, J.~Kova{\v c}evi{\'c}, J.~M.~F. Moura, and P.~Vandergheynst,
\newblock Graph signal processing: Overview, challenges, and applications,
\newblock {\em Proceedings of the IEEE} {\bf 106}(5) (2018), 808--828.
\newblock doi:10.1109/JPROC.2018.2820126.

\bibitem{AnisGaddeOrtega2016Sampling}
A.~Anis, A.~Gadde, and A.~Ortega,
\newblock Efficient sampling set selection for bandlimited graph signals using graph spectral proxies,
\newblock {\em IEEE Transactions on Signal Processing} (2016).
\newblock doi:10.1109/TSP.2016.2546233.

\bibitem{Pesenson2008PaleyWiener}
I.~Pesenson,
\newblock Sampling in Paley--Wiener spaces on combinatorial graphs,
\newblock {\em Transactions of the American Mathematical Society} {\bf 360} (2008), 5603--5627.
\newblock doi:10.1090/S0002-9947-08-04456-8.

\bibitem{MarquesSegarraMateos2020SPMag}
A.~G. Marques, S.~Segarra, and G.~Mateos,
\newblock Signal processing on directed graphs: The role of edge directionality when processing and learning from network data,
\newblock {\em IEEE Signal Processing Magazine} {\bf 37}(6) (2020), 99--116.
\newblock doi:10.1109/MSP.2020.3014597.



\end{thebibliography}
\end{document}